\newtheorem{theorem}{Theorem}[section]
\newtheorem{lemma}[theorem]{Lemma}
\theoremstyle{definition}
\newtheorem{definition}[theorem]{Definition}
\newtheorem{question}[theorem]{Question}
\newtheorem{corollary}[theorem]{Corollary}
\newtheorem{remark}[theorem]{Remark}
\theoremstyle{remark}
\newcommand{\be}{\begin{equation}}
\newcommand{\ee}{\end{equation}}
\numberwithin{equation}{section}
\begin{document}

\title{Cauchy-Schwarz-type inequalities on K\"{a}hler manifolds-II}

\author{Ping Li}
\address{%
Department of Mathematics\\
Tongji University\\
Siping Road 1239, Shanghai 200092\\
China}

\email{pingli@tongji.edu.cn}
\thanks{The author was partially
supported by the National Natural Science Foundation of China (Grant
No. 11471247) and and the Fundamental Research Funds for the Central
Universities.}
\subjclass[2010]{32Q15, 58A14, 14F2XX.}



\keywords{Cauchy-Schwarz-type inequality, compact K\"{a}hler
manifold, Hodge-Riemann bilinear relation}

\begin{abstract}
We establish in this note some Cauchy-Schwarz-type inequalities on
compact K\"{a}hler manifolds, which generalize the classical
Khovanskii-Teissier inequalities to higher-dimensional cases. Our
proof is to make full use of the mixed Hodge-Riemann bilinear
relations due to Dinh and Nguy$\hat{\text{e}}$n. A proportionality
problem related to our main result is also proposed.
\end{abstract}

\maketitle

\section{Introduction and main results}
Suppose $X$ is an $n$-dimensional algebraic manifold and
$D_1,D_2,\ldots,D_n$ are $n$ (not necessarily distinct) ample
divisors on $X$. Then we have the following opposite
Cauchy-Schwarz-type inequality, \be\label{KT}([D_1D_2D_3\cdots
D_n])^2\geq[D_1D_1D_3\cdots D_n]\cdot[D_2D_2D_3\cdots D_n],\ee where
$[\cdot]$ denotes the intersection number of the divisors inside it
and the equality holds if and only if the two divisors $D_1$ and
$D_2$ are numerically proportional.

(\ref{KT}) was discovered independently by Khovanskii and Teissier
around in 1979 (\cite{Kh},\cite{Tei}) and now is called
Khovanskii-Teissier inequality. This equality is indeed a
generalization of the classical Aleksandrov-Fenchel inequalities and
thus present a nice relationship between the theory of mixed volumes
and algebraic geometry (\cite[p. 114]{Fu}). The proof of (\ref{KT})
is to apply the usual Hodge-Riemann bilinear relations (\cite[p.
122-123]{GH}) to the K\"{a}hler classes determined by these divisors
and an induction argument. The approach also suggests that the usual
Hodge-Riemann bilinear relations may be extended to the mixed case.
After some partial results towards this direction
(\cite{Gr},\cite{Ti}), this aim was achieved in its full generality
by Dinh and Nguy$\hat{\text{e}}$n in \cite{DN}.

We would like to point out a fact, which was not mentioned
explicitly in \cite{DN}, that (\ref{KT}) can now be extended by the
mixed Hodge-Riemann bilinear relations as follows. Suppose $\omega,
\omega_1, \omega_2,\ldots,\omega_{n-2}$ are $n-1$ K\"{a}hler classes
and $\alpha\in H^{1,1}(M,\mathbb{R})$ an \emph{arbitrary}
real-valued $(1,1)$-form on an $n$-dimensional compact connected
K\"{a}hler manifold $M$. Then we have
\be\label{CS1}\big(\int_M\alpha\wedge\omega\wedge\omega_1\wedge\cdots\wedge\omega_{n-2}\big)^2\geq
\big(\int_M\alpha^2\wedge\omega_1\wedge\cdots\wedge\omega_{n-2}\big)
\cdot\big(\int_M\omega^2\wedge\omega_1\wedge\cdots\wedge\omega_{n-2}\big),\ee
where the equality holds if and only if $\alpha\in\mathbb{R}\omega$.
Indeed, \cite[Theorem A]{DN} tells us that the index of the
following bilinear form \be\label{bilinear}Q(u,v):=\int_M u\wedge
v\wedge\omega_1\wedge\cdots\wedge\omega_{n-2}, \qquad u,v\in
H^{1,1}(M,\mathbb{R}),\ee
 is of the form $(+,-,\cdots,-)$, i.e., the
positive and negative indices are $1$ and $h^{1,1}-1$ respectively,
where $h^{1,1}$ is the corresponding Hodge number of $M$ \big(the
dimension of $H^{1,1}(M,\mathbb{R})$\big). Define a real-valued
function $f(t):=Q(\omega+t\alpha,\omega+t\alpha)$
$(t\in\mathbb{R})$. Then $f(0)>0$ as
$\omega,\omega_1,\cdots,\omega_{n-2}$ are all K\"{a}hler classes and
thus their product is strictly positive. $\omega+t\alpha$ spans a
$2$-dimensional subspace in $H^{1,1}(M,\mathbb{R})$ if $\alpha$ and
$\omega$ are linearly independent and thus $f(t_0)<0$ for some
$t_0\in\mathbb{R}$ in view of the index of $Q(\cdot,\cdot)$. Then
the discriminant of $f(t)$ gives (\ref{CS1}) with strict sign
$``>"$.

When these K\"{a}hler classes are all equal:
$\omega=\omega_1=\cdots=\omega_{n-2}$, (\ref{CS1}) degenerates to
the following special case:
\be\label{CS2}\big(\int_M\alpha\wedge\omega^{n-1}\big)^2\geq
\big(\int_M\alpha^2\wedge\omega^{n-2}\big)\cdot\big(\int_M\omega^{n}\big),\qquad
\forall~\alpha\in H^{1,1}(M,\mathbb{R}),\ee which is quite
well-known and, to the author's best knowledge, should be due to
Apte in \cite{Ap}. Inspired by (\ref{CS2}), the author asked in
\cite{Li} whether or not there exists a similar inequality to
(\ref{CS2}) for those $\alpha\in H^{p,p}(M,\mathbb{R})$ $(1\leq
p\leq [\frac{n}{2}])$ and obtained a related result (\cite[Theorem
1.3]{Li}), whose proof is also based on the usual Hodge-Riemann
bilinear relations. As an application we presented some Chern number
inequalities when the Hodge numbers of the manifolds satisfy some
constraints (\cite[Corollary 1.5]{Li}). Now keeping the mixed
Hodge-Riemann bilinear relations established in \cite{DN} in mind,
we may also ask if the main idea of the proof in \cite{Li} can be
carried over to the mixed case to extend the $\alpha$ in (\ref{CS1})
to $H^{p,p}(M,\mathbb{R})$ for $1\leq p\leq [\frac{n}{2}]$. The
answer is affirmative and this is the main goal of our current
article. So this article can be viewed as a sequel to \cite{Li},
which explains its title either.

Our main result (Theorem \ref{mainresult}) will be stated in the
rest of this section. In Section \ref{section2} we briefly review
the mixed Hodge-Riemann bilinear relations and then present the
proof of Theorem \ref{mainresult}. In Section \ref{section3} we
discuss a proportionality problem related to (\ref{KT}) posed by
Teissier and propose a similar problem related to our main result.

In order to state our result as general as possible, we would like
to investigate the elements in $H^{p,p}(M,\mathbb{C})$, i.e.,
complex-valued $(p,p)$-forms on $M$. The following definition is
inspired by (\ref{CS1}) and is a mixed analogue to \cite[Definition
1.1]{Li}.

\begin{definition}\label{def}
Suppose $M$ is an $n$-dimensional compact connected K\"{a}hler
manifold. For $1\leq p\leq[\frac{n}{2}]$, $\alpha\in
H^{p,p}(M,\mathbb{C})$ and $n-2p+1$ K\"{a}hler calsses
$\omega,\omega_1,\ldots,\omega_{n-2p}$, we put
$\Omega_p:=\omega_1\wedge\cdots\wedge\omega_{n-2p}$ and define \be
g(\alpha,\omega;\Omega_p):=\big(\int_M\alpha\wedge\bar{\alpha}\wedge\Omega_p\big)
\cdot\big(\int_M\omega^{2p}\wedge\Omega_p\big)-
\big(\int_M\alpha\wedge\omega^p\wedge\Omega_p\big)\cdot\big(\int_M\bar{\alpha}
\wedge\omega^p\wedge\Omega_p\big).\nonumber\ee
 $\alpha$ is said to satisfy Cauchy-Schwarz (resp.
opposite Cauchy-Schwarz) inequality with respect to the K\"{a}hler
classes $\omega$ and $(\omega_1,\ldots,\omega_{n-2p})$ if
$g(\alpha,\omega;\Omega_p)\geq 0$ (resp.
$g(\alpha,\omega;\Omega_p)\leq 0$).
\end{definition}

\begin{remark}
Note that $\alpha\in H^{p,p}(M,\mathbb{R})$ if and only if
$\alpha=\bar{\alpha}$. Also note that $g(\alpha,\omega;\Omega_p)$ in
the above definition is a real number and so we can discuss its
non-negativity or non-positivity.
\end{remark}

The main result of this note, which extends \cite[Theorem 1.3]{Li}
to the mixed case, is the following

\begin{theorem}\label{mainresult}
Suppose $M$ is an $n$-dimensional compact connected K\"{a}hler
manifold.
\begin{enumerate}
\item
Given $1\leq p\leq[\frac{n}{2}]$, all elements in
$H^{p,p}(M,\mathbb{C})$ satisfy Cauchy-Schwarz inequality with
respect to any K\"{a}hler classes $\omega$ and
$(\omega_1,\ldots,\omega_{n-2p})$ (in the sense of Definition
\ref{def}) if and only if the Hodge numbers of $M$ satisfy
\be\label{hodge1}h^{2i,2i}=h^{2i+1,2i+1},\qquad 0\leq
i\leq[\frac{p+1}{2}]-1.\ee

\item
All elements in $H^{1,1}(M,\mathbb{C})$ satisfy opposite
Cauchy-Schwarz inequality with respect to any K\"{a}hler classes
$\omega$ and $(\omega_1,\ldots,\omega_{n-2p})$.

\item
Given $2\leq p\leq[\frac{n}{2}]$, all elements in
$H^{p,p}(M,\mathbb{C})$ satisfy opposite Cauchy-Schwarz inequality
with respect to any K\"{a}hler classes $\omega$ and
$(\omega_1,\ldots,\omega_{n-2p})$ if and only if the Hodge numbers
of $M$ satisfy \be\label{hodge2}h^{2i-1,2i-1}=h^{2i,2i},\qquad 1\leq
i\leq[\frac{p}{2}].\ee
\end{enumerate}
Moreover, in all the cases mentioned above, the equalities hold if
and only if these $\alpha$ are proportional to $\omega^p.$
\end{theorem}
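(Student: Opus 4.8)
The plan is to recognize $g(\alpha,\omega;\Omega_p)$ as a $2\times 2$ Gram determinant of a single Hermitian form and then read off its sign from the form's signature. On $H^{p,p}(M,\mathbb{C})$ introduce
\[
Q(\beta,\gamma):=\int_M\beta\wedge\bar\gamma\wedge\Omega_p ,\qquad \beta,\gamma\in H^{p,p}(M,\mathbb{C}),
\]
which is Hermitian because $\Omega_p$ is real. Since $\overline{\omega^p}=\omega^p$, the definition of $g$ rearranges exactly into
\[
g(\alpha,\omega;\Omega_p)=Q(\omega^p,\omega^p)\,Q(\alpha,\alpha)-Q(\alpha,\omega^p)\,Q(\omega^p,\alpha),
\]
i.e.\ the determinant of $Q$ restricted to $\mathrm{span}\{\omega^p,\alpha\}$. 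As $\omega,\omega_1,\dots,\omega_{n-2p}$ are all K\"ahler, $Q(\omega^p,\omega^p)=\int_M\omega^{2p}\wedge\Omega_p>0$, so I would write $\alpha=c\,\omega^p+\alpha_0$ with $c=Q(\alpha,\omega^p)/Q(\omega^p,\omega^p)$ chosen so that $\alpha_0$ is $Q$-orthogonal to $\omega^p$. A one-line computation, using $Q(\alpha_0,\omega^p)=Q(\omega^p,\alpha_0)=0$, then collapses the determinant to $g(\alpha,\omega;\Omega_p)=Q(\omega^p,\omega^p)\cdot Q(\alpha_0,\alpha_0)$. Thus the \emph{sign} of $g$ is the sign of $Q(\alpha_0,\alpha_0)$, and the entire theorem reduces to determining the signature of $Q$ on the $Q$-orthogonal complement of $\mathbb{C}\omega^p$.

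To compute that signature I would invoke the mixed Hodge-Riemann bilinear relations of Dinh-Nguy\^en, taking $\omega$ as the Lefschetz operator $L$ and $\Omega_p$ as the twist. This yields a primitive (Lefschetz) decomposition $H^{p,p}=\bigoplus_{j=0}^{p}L^j P^{p-j}$, where $P^{p-j}\subset H^{p-j,p-j}$ is the $L$-primitive part, the summands are mutually $Q$-orthogonal, and $Q$ is \emph{definite} on $L^j P^{p-j}$ of sign $(-1)^{p-j}$. The sign is obtained by pairing $L^j\eta$ with itself for primitive $\eta\in P^{p-j}$: this produces $\int_M\eta\wedge\bar\eta\wedge\omega^{2j}\wedge\Omega_p$, to which the mixed relations apply at level $p-j$ with the twist $\omega^{2j}\wedge\Omega_p$ (a product of K\"ahler classes), giving exactly the sign $(-1)^{p-j}$ as in the classical case. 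By hard Lefschetz (valid for any K\"ahler class) the primitive dimensions are the Hodge-number differences $\dim P^{p-j}=h^{p-j,p-j}-h^{p-j-1,p-j-1}$, which is the crucial link to the statement. Finally $\omega^p=L^p\cdot 1$ spans the top piece $L^pP^{0}$ (sign $+$), so the $Q$-orthogonal complement of $\mathbb{C}\omega^p$ is precisely $\bigoplus_{j=0}^{p-1}L^jP^{p-j}$, carrying sign $(-1)^{p-j}$ on its $j$-th summand.

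The three statements then drop out. The inequality $g\ge 0$ holds for all $\alpha$ iff $Q\ge 0$ on this complement, i.e.\ iff every summand of negative sign vanishes; negative sign means $p-j$ odd, so the condition is $h^{p-j,p-j}=h^{p-j-1,p-j-1}$ for odd $p-j$, which reindexes via $p-j=2i+1$ into \eqref{hodge1}. Dually, $g\le 0$ for all $\alpha$ forces the positive-sign summands ($p-j$ even) to vanish, giving \eqref{hodge2} after $p-j=2i$; when $p=1$ the complement is just $P^{1}$ of sign $-$, so the opposite inequality is automatic and part (2) needs no hypothesis. For the equality clause, under the stated Hodge constraints all surviving summands share a single sign, so $Q$ is definite, hence non-degenerate, on the complement; therefore $Q(\alpha_0,\alpha_0)=0$ forces $\alpha_0=0$, i.e.\ $\alpha\in\mathbb{C}\omega^p$, and the same definiteness gives the equality characterization in part (2).

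The genuinely substantive step is the second paragraph: the signed primitive decomposition. One must apply the Dinh-Nguy\^en relations not only to the top pairing but at every level $p-j$ with the shifted twist $\omega^{2j}\wedge\Omega_p$, verify mutual $Q$-orthogonality of the Lefschetz summands, and pin down both the sign $(-1)^{p-j}$ and the identification $\dim P^{p-j}=h^{p-j,p-j}-h^{p-j-1,p-j-1}$; this is where the mixed theory does all the work and is the main obstacle. The only remaining care is the purely combinatorial translation of "all negative (resp.\ positive) summands vanish" into the precise ranges $0\le i\le[\frac{p+1}{2}]-1$ and $1\le i\le[\frac{p}{2}]$ appearing in \eqref{hodge1} and \eqref{hodge2}.
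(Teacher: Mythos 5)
Your proposal is correct and is essentially the paper's own argument in different packaging: your orthogonal decomposition $H^{p,p}=\bigoplus_{j}L^{j}P^{p-j}$ with the level-dependent twists $\omega^{2j}\wedge\Omega_p$, the signs $(-1)^{p-j}$ from the mixed Hodge--Riemann relations, and the dimension count $\dim P^{p-j}=h^{p-j,p-j}-h^{p-j-1,p-j-1}$ are exactly the content of the paper's Lemma \ref{lemma1} (iterated use of (\ref{lefdec}) with shifted reference K\"ahler classes, the orthogonality relations (\ref{s}), and formula (\ref{dimensionformula})) combined with Theorem \ref{DN}. Your Gram-determinant reformulation of $g$ and reduction to the signature on the $Q$-orthogonal complement of $\mathbb{C}\omega^p$ is a cosmetic repackaging of the paper's direct computation (\ref{2star}) and its ``if''/``only if'' argument, including the same counterexample construction from a nonzero element of a negative-definite summand.
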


The first part of the following corollary extends the
Khovanskii-Teissier inequalities (\ref{KT}) and (\ref{CS1}).
\begin{corollary}\label{coro1}
\begin{enumerate}~
\item
\be\begin{split}
&\big(\int_M\alpha\wedge\omega\wedge\omega_1\wedge\cdots\wedge\omega_{n-2}\big)\cdot
\big(\int_M\bar{\alpha}\wedge\omega\wedge\omega_1\wedge\cdots\wedge\omega_{n-2}\big)\\
\geq&
\big(\int_M\alpha\wedge\bar{\alpha}\wedge\omega_1\wedge\cdots\wedge\omega_{n-2}\big)
\cdot\big(\int_M\omega^2\wedge\omega_1\wedge\cdots\wedge\omega_{n-2}\big)
\end{split}\nonumber\ee for any K\"{a}hler classes
$\omega,\omega_1,\ldots,\omega_{n-2}$ and any $\alpha\in
H^{1,1}(M,\mathbb{C})$, where the equality holds if and only if
$\alpha\in\mathbb{C}\omega$.

\item
If $h^{1,1}=1$, then all elements in $H^{2,2}(M,\mathbb{C})$ ($n\geq
4$) satisfy Cauchy-Schwarz inequality with respect to any K\"{a}hler
classes in the sense of Definition \ref{def}. Moreover, the equality
case holds if and only if the element is proportional to $\omega^2$.

\item
If $h^{1,1}=h^{2,2}$, then all elements in $H^{2,2}(M,\mathbb{C})$
$(n\geq 4)$ and $H^{3,3}(M,\mathbb{C})$ $(n\geq 6)$ satisfy opposite
Cauchy-Schwarz inequality with respect to any K\"{a}hler classes in
the sense of Definition \ref{def}. Moreover, the equality case holds
if and only if the element is proportional to $\omega^2$ or
$\omega^3$ respectively.
\end{enumerate}
\end{corollary}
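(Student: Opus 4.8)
The plan is to recast the entire statement as a single signature computation for one Hermitian form and then read off the three cases by linear algebra. First I would introduce on $H^{p,p}(M,\mathbb{C})$ the Hermitian form
\[
Q(\alpha,\beta):=\int_M\alpha\wedge\bar{\beta}\wedge\Omega_p ,\qquad \alpha,\beta\in H^{p,p}(M,\mathbb{C}),
\]
so that $Q(\omega^p,\omega^p)=\int_M\omega^{2p}\wedge\Omega_p>0$ (a product of K\"{a}hler classes) and $g(\alpha,\omega;\Omega_p)=Q(\alpha,\alpha)\,Q(\omega^p,\omega^p)-|Q(\alpha,\omega^p)|^2$. Replacing $\alpha$ by its $Q$-orthogonal projection off $\omega^p$, namely $\beta:=\alpha-\tfrac{Q(\alpha,\omega^p)}{Q(\omega^p,\omega^p)}\,\omega^p$, turns this into $g(\alpha,\omega;\Omega_p)=Q(\beta,\beta)\,Q(\omega^p,\omega^p)$ with $Q(\beta,\omega^p)=0$. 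Since $Q(\omega^p,\omega^p)>0$ and $\beta$ sweeps out the whole hyperplane $W:=\{\gamma:Q(\gamma,\omega^p)=0\}$ as $\alpha$ ranges over $H^{p,p}(M,\mathbb{C})$, the Cauchy-Schwarz (resp.\ opposite Cauchy-Schwarz) inequality holds for every $\alpha$ if and only if $Q$ is positive (resp.\ negative) semidefinite on $W$.

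The heart of the proof is then to compute the signature $(a_+,a_-,a_0)$ of $Q$ on $H^{p,p}(M,\mathbb{C})$. Here I would invoke Dinh and Nguy$\hat{\text{e}}$n \cite{DN}: mixed hard Lefschetz guarantees that $Q$ is nondegenerate for every choice of K\"{a}hler classes $\omega_1,\ldots,\omega_{n-2p}$, so $a_0=0$, and the integer-valued signature cannot jump as the tuple $(\omega_1,\ldots,\omega_{n-2p})$ is deformed inside the connected K\"{a}hler cone to a constant tuple $(\omega_0,\ldots,\omega_0)$. At that endpoint $Q$ becomes the classical form $\int_M\cdot\wedge\bar{\cdot}\wedge\omega_0^{\,n-2p}$, whose signature is read off from the Lefschetz decomposition $H^{p,p}=\bigoplus_{k=0}^{p}\omega_0^{\,p-k}P^{k,k}$: the classical Hodge-Riemann relations make $Q$ exactly $(-1)^k$-definite on the $k$-th summand $\omega_0^{\,p-k}P^{k,k}$, whose dimension is $d_k:=h^{k,k}-h^{k-1,k-1}\ge 0$ (hard Lefschetz, since $2k\le 2p\le n$). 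Hence $a_+=\sum_{k\ \mathrm{even}}d_k$ and $a_-=\sum_{k\ \mathrm{odd}}d_k$, with $d_0=h^{0,0}=1$.

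With the signature in hand the three parts follow mechanically. As $\omega^p$ spans a $Q$-positive line and $Q$ is nondegenerate, the restriction $Q|_W$ has signature $(a_+-1,\,a_-,\,0)$. For part (1), $Q|_W\ge 0$ iff $a_-=0$, i.e.\ $d_k=0$ for every odd $k\le p$, which is precisely \eqref{hodge1}; if some odd-$k$ summand were nonzero I would exhibit a violating $\alpha$ by choosing $\beta\in W$ a negative vector. For parts (2)–(3), $Q|_W\le 0$ iff $a_+=1$, i.e.\ $d_k=0$ for every even $k$ with $2\le k\le p$, which is \eqref{hodge2}; when $p=1$ there is no such $k$, so $a_+=d_0=1$ holds automatically and the opposite inequality is unconditional, giving part (2). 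Finally, whenever the relevant Hodge condition holds, $a_0=0$ forces $Q|_W$ to be \emph{definite}, so $g(\alpha,\omega;\Omega_p)=0$ forces $\beta=0$, i.e.\ $\alpha\in\mathbb{C}\omega^p$; this yields the equality characterization in every case.

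The step I expect to be the real obstacle is the signature computation, specifically justifying that the signature in the genuinely mixed situation, where $\Omega_p$ is a product of distinct K\"{a}hler classes rather than a power of one, coincides with the classical one. The deformation argument reduces this entirely to the nondegeneracy of $Q$ for arbitrary K\"{a}hler tuples, which is exactly the mixed hard Lefschetz/Hodge-Riemann package of \cite{DN}; without it the signature could in principle jump along the path. One must also keep track of the trivial but necessary positivity $Q(\omega^p,\omega^p)>0$, which holds since products of K\"{a}hler classes are strictly positive. Everything else—matching the conditions $a_-=0$ and $a_+=1$ to the index ranges in \eqref{hodge1} and \eqref{hodge2}—is routine bookkeeping.
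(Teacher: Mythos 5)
Your proposal is correct, but it takes a genuinely different route from the paper. In the paper, Corollary \ref{coro1} is a pure specialization of Theorem \ref{mainresult}: part (1) is the case $p=1$ of its part (2) (the displayed inequality is exactly $g(\alpha,\omega;\Omega_1)\leq 0$), part (2) is the case $p=2$ of its part (1), where \eqref{hodge1} reduces to $h^{0,0}=h^{1,1}$, i.e.\ $h^{1,1}=1$, and part (3) is the case $p=2,3$ of its part (3), where \eqref{hodge2} reduces to $h^{1,1}=h^{2,2}$. Theorem \ref{mainresult} itself is proved via Lemma \ref{lemma1}: the mixed Lefschetz decomposition \eqref{lefdec} is applied repeatedly, with reference tuples acquiring two more copies of $\omega$ at each step, to write $\alpha=\lambda\omega^p+\sum_{i=1}^p\alpha_i\wedge\omega^{p-i}$ with each $\alpha_i$ mixed-primitive, yielding the closed formula \eqref{2star}, whose $i$-th term has sign $(-1)^i$ by the positive-definiteness part (3) of Theorem \ref{DN}. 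You never decompose $\alpha$ at the given mixed tuple: you reduce the statement to semidefiniteness of $Q$ on the $Q$-orthocomplement $W$ of $\omega^p$ (the projection identity $g=Q(\beta,\beta)\,Q(\omega^p,\omega^p)$ is correct), obtain nondegeneracy of $Q$ for every K\"ahler tuple from the mixed hard Lefschetz isomorphism of Theorem \ref{DN}(1) plus Poincar\'e duality, and then compute the signature $(a_+,a_-)$ by deforming $(\omega_1,\ldots,\omega_{n-2p})$ inside the convex, hence connected, K\"ahler cone to a constant tuple, where the classical Lefschetz decomposition and classical Hodge--Riemann relations give $a_+=\sum_{k\ \mathrm{even}}d_k$ and $a_-=\sum_{k\ \mathrm{odd}}d_k$ with $d_k:=h^{k,k}-h^{k-1,k-1}$ (one should also record that distinct classical Lefschetz summands are $Q$-orthogonal so that signatures add, but this is standard). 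The Sylvester bookkeeping converting $a_-=0$ and $a_+=1$ into \eqref{hodge1} and \eqref{hodge2}, and the definiteness of $Q|_W$ giving the equality characterizations $\alpha\in\mathbb{C}\omega^p$, are sound. What the paper's approach buys is the explicit formula \eqref{2star}, which localizes any failure of (opposite) Cauchy--Schwarz in individual primitive components $\alpha_i$ and yields its subsequent corollary, at the price of using the full mixed package of \cite{DN} (decomposition and positivity). What yours buys is economy of inputs: only the nondegeneracy consequence of \cite{DN} is quoted, the signature being imported from classical Hodge theory by continuity --- though this deformation step in effect re-derives the signature form of the mixed Hodge--Riemann relations rather than citing it.
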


\begin{remark}
In \cite[Example 1.7]{Li}, the author described in detail many
examples of compact connected K\"{a}hler manifolds whose Hodge
numbers satisfy $h^{1,1}=1$ and $h^{1,1}=h^{2,2}$ respectively.
These include the complete intersections in complex projective
spaces, the complex flag manifolds $G/P_{\textrm{max}}$ ($G$ is a
semisimple complex Lie group and $P_{\textrm{max}}$ is a maximal
parabolic subgroup of $G$), the one point blow-up of complex
projective spaces and so on.
\end{remark}

\section{Proof of the main result}\label{section2}
\subsection{The mixed Hodge-Riemann bilinear relations}
In this subsection we briefly recall the mixed Hodge-Riemann biliner
relations established in \cite{DN} by Dinh and
Nguy$\hat{\text{e}}$n.

As before denote by $M$ an $n$-dimensional compact connected
K\"{a}hler manifold.  We arbitrarily fix two non-negative integers
$p,q$ such that $p,q\leq[\frac{n}{2}]$ and $n-p-q+1$ K\"{a}hler
classes $\omega,\omega_1,\ldots,\omega_{n-p-q}$ on $M$. Put
$\Omega:=\omega_1\wedge\cdots\wedge\omega_{n-p-q}$. Define the mixed
primitive subspace of $H^{p,q}(M,\mathbb{C})$ \emph{with respect to
$\omega$ and $\Omega$} by
\be\label{primitivedef}P^{p,q}(M;\omega,\Omega):=\{\alpha\in
H^{p,q}(M,\mathbb{C})~|~ \alpha\wedge\omega\wedge\Omega=0\}.\ee
Define the mixed Hodge-Riemann bilinear form
$Q_{\Omega}(\cdot,\cdot)$ with respect to $\Omega$ on
$H^{p,q}(M,\mathbb{C})$ by
\be\label{quadric}Q_{\Omega}(\alpha,\beta):=(\sqrt{-1})^{q-p}
(-1)^{\frac{(p+q)(p+q+1)}{2}}\int_M\alpha\wedge\bar{\beta}\wedge\Omega,
\qquad \alpha,\beta\in H^{p,q}(M,\mathbb{C}).\ee

 \begin{remark}
 \begin{enumerate}~
\item
 Note that this
definition of $Q_{\Omega}(\cdot,\cdot)$ differs from that in
(\ref{bilinear}) by a sign when $p=q=1$.

\item
Clearly when $\omega=\omega_1=\cdots\omega_{n-p-q}$,
$P^{p,q}(M;\omega,\Omega)$ and $Q_{\Omega}(\cdot,\cdot)$ are nothing
but the usual primitive cohomology group and Hodge-Riemann bilinear
form with respect to the K\"{a}hler class $\omega$.

\item
The symbols $P^{p,q}(M;\omega,\Omega)$ and $Q_{\Omega}(\cdot,\cdot)$
we use here are simply denoted by $P^{p,q}(M)$ and $Q(\cdot,\cdot)$
respectively in \cite{DN}. We use the current symbols to avoid
confusion as they stress the dependence on the choices of $\omega$
and $\Omega$, whose advantage will be clear in the process of our
proof in Theorem \ref{mainresult} in the next section.
\end{enumerate}
\end{remark}

With the above notation understood, we have the following remarkable
result due to Dinh and Nguy$\hat{\text{e}}$n in \cite[Theorems
A,B,C]{DN}, which extends the classical Hodge-Riemann biliner
relations.

\begin{theorem}[mixed Hodge-Riemann bilinear relations]\label{DN}
\begin{enumerate}~
\item
(mixed Hard Lefschetz theorem) The linear map
$$\tau:~H^{p,q}(M,\mathbb{C})\rightarrow H^{n-q,n-p}(M,\mathbb{C})$$
given by
\be\label{hardlef}\tau(\alpha):=\alpha\wedge\Omega,\qquad\alpha\in
H^{p,q}(M,\mathbb{C})\ee is an isomorphism.

\item
(mixed Lefschetz decomposition) We have the following canonical
decomposition:
\be\label{lefdec}H^{p,q}(M,\mathbb{C})=P^{p,q}(M;\omega,\Omega)\oplus\big(\omega\wedge
H^{p-1,q-1}(M,\mathbb{C})\big).\ee Here
$H^{p-1,q-1}(M,\mathbb{C}):=0$
 if either $p=0$ or $q=0$.
\item
(Positive-definiteness) The mixed Hodge-Riemann bilinear form
$Q_{\Omega}(\cdot,\cdot)$ is positive-definite on the mixed
primitive subspace $P^{p,q}(M;\omega,\Omega)$.
\end{enumerate}
\end{theorem}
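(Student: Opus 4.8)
The plan is to reduce the Lefschetz decomposition (2) and the positive-definiteness (3) to the mixed Hard Lefschetz isomorphism (1), and then to attack (1) by a pointwise linear-algebra statement followed by a transfer to cohomology. First I would derive (2) from (1). The two summands in (\ref{lefdec}) are $Q_\Omega$-orthogonal: for $\alpha\in P^{p,q}(M;\omega,\Omega)$ and $\beta=\omega\wedge\gamma$ with $\gamma\in H^{p-1,q-1}(M,\mathbb{C})$ one has $\int_M\alpha\wedge\overline{\beta}\wedge\Omega=\pm\int_M(\alpha\wedge\omega\wedge\Omega)\wedge\overline{\gamma}=0$ directly from the defining relation (\ref{primitivedef}). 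Injectivity of $\omega\wedge(\cdot)$ on $H^{p-1,q-1}(M,\mathbb{C})$ and the vanishing of $P^{p,q}\cap\big(\omega\wedge H^{p-1,q-1}\big)$ both follow by applying (1) to the augmented tuple $(\omega,\omega_1,\ldots,\omega_{n-p-q})$, i.e. to the operator $\cdot\wedge\omega\wedge\Omega$; a dimension count using the resulting isomorphisms together with Serre duality then shows the direct sum exhausts $H^{p,q}(M,\mathbb{C})$. This part is purely formal once (1) is available.

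Next I would obtain (3) from (1) together with the classical Hodge--Riemann relations by a connectedness argument. The Kähler cone is convex, hence the space of admissible tuples $(\omega,\omega_1,\ldots,\omega_{n-p-q})$ is connected and contains the diagonal point $\omega=\omega_1=\cdots=\omega_{n-p-q}$, at which $P^{p,q}(M;\omega,\Omega)$ and $Q_\Omega$ reduce to the ordinary primitive subspace and Hodge--Riemann form. Along any path in this parameter space the dimension of $P^{p,q}(M;\omega,\Omega)$ is constant (by (1) it equals the fixed combination $h^{p,q}-h^{p-1,q-1}$ of Hodge numbers), and by the $Q_\Omega$-orthogonality of (\ref{lefdec}) the restriction $Q_\Omega|_{P^{p,q}}$ is non-degenerate, again thanks to (1). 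A continuously varying non-degenerate Hermitian form on a subspace of constant dimension has locally constant signature; hence the signature of $Q_\Omega|_{P^{p,q}}$ is constant on the connected parameter space and equals its value at the diagonal point, where the classical relations give positive-definiteness.

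The genuine difficulty is (1), the mixed Hard Lefschetz theorem, for which the deformation idea just used is unavailable — non-degeneracy is precisely what could fail along a path, so connectedness cannot be bootstrapped here. I would first prove the pointwise analogue on the exterior algebra $\Lambda^{\bullet}(T_x^{*}M\otimes\mathbb{C})$: for positive $(1,1)$-forms the pointwise operator of wedging with $\Omega$ is a linear isomorphism of the relevant graded pieces and the associated Hermitian form is definite on the pointwise mixed primitives. This linear-algebra statement can be established by induction on the number of distinct positive forms, together with a continuity/perturbation argument reducing to the single-form case (the Gromov--Timorin circle of ideas).

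The main obstacle, and the heart of the theorem, is the passage from this pointwise positivity to cohomology: unlike the classical situation there is no single Kähler metric for which the relevant representatives are simultaneously harmonic, so Hodge theory cannot be invoked directly. I would handle this by fixing harmonic representatives for one auxiliary metric and then controlling the $\bar\partial$- and $\partial$-exact error terms produced as the remaining Kähler forms are wedged in, using the pointwise inequalities to show that these errors cannot destroy non-degeneracy; equivalently, one propagates the estimate through an induction that peels off one Kähler class at a time. Making this transfer rigorous — converting pointwise definiteness into a global a priori bound that survives the absence of a common harmonic theory — is where essentially all of the work lies.
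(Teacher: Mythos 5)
You should first note that the paper contains no proof of Theorem~\ref{DN} at all: it is quoted verbatim from Dinh and Nguy\^{e}n \cite[Theorems A, B, C]{DN}, so the only question is whether your sketch would constitute an independent proof. Your reductions of (2) and (3) to (1) are essentially sound: the $Q_\Omega$-orthogonality of the two summands in (\ref{lefdec}) is immediate from (\ref{primitivedef}), nondegeneracy of $Q_\Omega$ on all of $H^{p,q}(M,\mathbb{C})$ follows from (1) plus Poincar\'e duality, and the connectedness-of-the-K\"ahler-cone argument for the signature is legitimate \emph{once (1) is known for every admissible tuple}. One local slip: to invoke (1) at bidegree $(p-1,q-1)$ you must augment the tuple by \emph{two} copies of $\omega$, so the relevant isomorphism is $\wedge\,\omega^{2}\wedge\Omega$ on $H^{p-1,q-1}(M,\mathbb{C})$; the operator $\wedge\,\omega\wedge\Omega$ you name is one K\"ahler class short of being a mixed hard Lefschetz instance (it lands in $H^{n-q,n-p}$, not the Serre dual of $(p-1,q-1)$). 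Injectivity of $\omega\wedge(\cdot)$ and the triviality of $P^{p,q}\cap(\omega\wedge H^{p-1,q-1})$ then follow by factoring through this corrected isomorphism.

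The genuine gap is that (1) is never proved. You concede that the passage from Timorin-type pointwise definiteness to cohomology is ``where essentially all of the work lies,'' and the mechanism you propose---harmonic representatives for a single auxiliary metric plus control of $\partial$- and $\bar\partial$-exact error terms---is not a known technique and is not how \cite{DN} proceeds; pointwise positivity of representatives simply does not propagate to cohomology by error estimates, which is precisely why the mixed case stayed open after \cite{Gr} and \cite{Ti}. Moreover, your structural claim that the deformation idea is ``unavailable'' for (1) because nondegeneracy could fail along a path draws the wrong conclusion from a correct observation: in \cite{DN} the hard Lefschetz statement and the positivity are established \emph{simultaneously} by exactly that deformation, via an induction in which one connects $\Omega$ to the diagonal case through the convex cone of K\"ahler tuples and shows the locus where the full mixed Hodge--Riemann package holds is both open (constancy of signature of a nondegenerate form) and closed---closedness being available because \emph{definiteness}, not mere nondegeneracy, survives limits as semi-definiteness, after which a putative kernel class of the limit form is excluded by a Cauchy--Schwarz/Poincar\'e-duality argument. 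In other words, the bootstrap you set aside is the proof: positivity supplies the compactness that prevents nondegeneracy from failing in the limit. As written, your proposal establishes (2) and (3) conditionally on (1) and leaves (1)---the heart of the theorem---open.
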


\begin{remark}
Note that $P^{p,q}(M;\omega,\Omega)$ depends on $\omega$ and
$\Omega$ while $H^{p,q}(M,\mathbb{C})$ is clearly independent of
them. This means, if we fix $\omega$ but change
$\omega_1,\ldots,\omega_{n-p-q}$, then $\Omega$ is also changed
respectively and so is $P^{p,q}(M;\omega,\Omega)$. But the mixed
Lefschetz decomposition theorem tells us that (\ref{lefdec}) remains
true. So the reference K\"{a}hler classes
$\omega,\omega_1,\ldots,\omega_{n-p-q}$ in context should be clear
when we apply (\ref{lefdec}). For unambiguity we shall use the
sentence "We apply (\ref{lefdec}) to $\alpha\in
H^{p,q}(M,\mathbb{C})$ with respect to the reference K\"{a}hler
classes $\omega$ and $(\omega_1,\ldots,\omega_{n-2p})$" to emphasize
it. This notation will play a key role in the proof of (\ref{1star})
Lemma \ref{lemma1}.
\end{remark}

\subsection{Proof of Theorem \ref{mainresult}}
We now apply the mixed Hodge-Riemann bilinear relations to prove our
Theorem \ref{mainresult}.

The following lemma uses the full power of (\ref{lefdec}).
\begin{lemma}\label{lemma1}
\begin{enumerate}~
\item
\be\label{dimensionformula}\text{dim}_{\mathbb{C}}
P^{p,q}(M;\omega,\Omega)=h^{p,q}-h^{p-1,q-1},\qquad 0\leq
p,q\leq[\frac{n}{2}],\ee where $h^{p-1,q-1}:=0$ if either $p=0$ or
$q=0$. This means that the dimension of $P^{p,q}(M;\omega,\Omega)$
is independent of $\omega$ and $\Omega$ and only depends on the
complex structure of $M$.

\item
Let $\alpha\in H^{p,p}(M,\mathbb{C})$ with $1\leq
p\leq[\frac{n}{2}]$ and $\omega,\omega_1,\ldots,\omega_{n-2p}$ be
$n-2p+1$ K\"{a}hler classes. Put
$\Omega_p:=\omega_1\wedge\cdots\wedge\omega_{n-2p}$. Then this
$\alpha$ can be written as follows.
\be\label{1star}\alpha=\lambda\omega^p+\sum_{i=1}^p\alpha_i\wedge\omega^{p-i},\ee
 where $\lambda\in\mathbb{C}$ and
 \be\label{star}\alpha_i\in P^{i,i}(M;\omega,\omega^{2(p-i)}\wedge\Omega_p)\qquad\big(\Leftrightarrow\alpha_i\wedge
 \omega^{2(p-i)+1}\wedge\Omega_p=0\big).\ee

\item
$g(\alpha,\omega;\Omega_p)$ given in Definition \ref{def} has the
following expression in terms of $\alpha_i$:
\be\label{2star}\begin{split}g(\alpha,\omega;\Omega_p)=
\big(\int_M\omega^{2p}\wedge\Omega_p\big)\cdot\big(\sum_{i=1}^p\int_M\alpha_i\wedge\bar{\alpha_i}
\wedge\omega^{2(p-i)}\wedge\Omega_p\big).\end{split}\ee
\end{enumerate}
\end{lemma}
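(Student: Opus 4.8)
The plan is to prove the three parts in order, using Theorem \ref{DN} as the engine throughout; no appeal to positive-definiteness (part (3) of Theorem \ref{DN}) is needed here, only the mixed Hard Lefschetz isomorphism (\ref{hardlef}) and the mixed Lefschetz decomposition (\ref{lefdec}).

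For part (1) I would read off the dimension count directly from (\ref{lefdec}). Since $h^{p,q}=\dim_{\mathbb{C}}P^{p,q}(M;\omega,\Omega)+\dim_{\mathbb{C}}\big(\omega\wedge H^{p-1,q-1}(M,\mathbb{C})\big)$, it suffices to show that the multiplication map $\beta\mapsto\omega\wedge\beta$ from $H^{p-1,q-1}(M,\mathbb{C})$ to $H^{p,q}(M,\mathbb{C})$ is injective, so that the second summand has dimension exactly $h^{p-1,q-1}$ (the degenerate cases $p=0$ or $q=0$ being covered by the stated convention). To see injectivity, note that if $\omega\wedge\beta=0$ then a fortiori $\beta\wedge\omega^2\wedge\Omega=\omega\wedge(\omega\wedge\beta)\wedge\Omega=0$; but $\omega^2\wedge\Omega$ is a product of exactly $n-(p-1)-(q-1)$ Kähler classes, so the mixed Hard Lefschetz isomorphism (\ref{hardlef}), applied at bidegree $(p-1,q-1)$ with this product playing the role of the reference product, forces $\beta=0$. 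The asserted independence of $\omega$ and $\Omega$ is then immediate, since $h^{p,q}-h^{p-1,q-1}$ involves only Hodge numbers.

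For part (2) I would iterate (\ref{lefdec}), the essential point being to track the correct reference classes at each stage. Setting $\beta_p:=\alpha$, I would inductively apply the mixed Lefschetz decomposition to $\beta_i\in H^{i,i}(M,\mathbb{C})$ \emph{with respect to the reference classes $\omega$ and $\omega^{2(p-i)}\wedge\Omega_p$}, obtaining $\beta_i=\alpha_i+\omega\wedge\beta_{i-1}$ with $\alpha_i\in P^{i,i}(M;\omega,\omega^{2(p-i)}\wedge\Omega_p)$ and $\beta_{i-1}\in H^{i-1,i-1}(M,\mathbb{C})$, for $i=p,p-1,\ldots,1$. The crucial bookkeeping is that $\omega^{2(p-i)}\wedge\Omega_p$ is a product of exactly $n-2i$ Kähler classes, which is precisely the number needed to legitimately apply (\ref{lefdec}) at bidegree $(i,i)$. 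At the final step $\beta_0\in H^{0,0}(M,\mathbb{C})=\mathbb{C}$ (here connectedness of $M$ enters), so $\beta_0=\lambda$ for a unique $\lambda\in\mathbb{C}$; back-substitution then yields (\ref{1star}), and the defining relation of the primitive subspace $P^{i,i}(M;\omega,\omega^{2(p-i)}\wedge\Omega_p)$ unwinds precisely to the stated condition (\ref{star}), namely $\alpha_i\wedge\omega^{2(p-i)+1}\wedge\Omega_p=0$.

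For part (3) I would substitute (\ref{1star}) into the definition of $g(\alpha,\omega;\Omega_p)$ from Definition \ref{def} and expand. The guiding principle is that any integral $\int_M\alpha_i\wedge\omega^{m}\wedge\Omega_p$ (and its conjugate analogue with $\bar\alpha_i$) vanishes as soon as $m\geq 2(p-i)+1$, since one factors $\omega^{m}=\omega^{2(p-i)+1}\wedge\omega^{m-2(p-i)-1}$ and invokes (\ref{star}). This at once annihilates every $\alpha_i$-contribution to $\int_M\alpha\wedge\omega^p\wedge\Omega_p$, leaving $\int_M\alpha\wedge\omega^p\wedge\Omega_p=\lambda\int_M\omega^{2p}\wedge\Omega_p$ and likewise the conjugate, so the second term of $g$ equals $|\lambda|^2(\int_M\omega^{2p}\wedge\Omega_p)^2$. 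The main computational step is the vanishing of the off-diagonal terms $\int_M\alpha_i\wedge\bar\alpha_j\wedge\omega^{2p-i-j}\wedge\Omega_p$ with $i\neq j$ arising from $\int_M\alpha\wedge\bar\alpha\wedge\Omega_p$: when $i<j$ one checks $2p-i-j\geq 2(p-j)+1$ and uses the conjugate primitivity of $\bar\alpha_j$, while for $i>j$ the symmetric estimate $2p-i-j\geq 2(p-i)+1$ lets the primitivity of $\alpha_i$ apply. Once only the diagonal survives, the two $|\lambda|^2(\int_M\omega^{2p}\wedge\Omega_p)^2$ contributions cancel and (\ref{2star}) falls out. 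I expect this cross-term vanishing to be the one place demanding genuine care, as it is exactly the index comparison of $2p-i-j$ against $2(p-j)+1$ (resp. $2(p-i)+1$) that produces the clean orthogonal-type decomposition (\ref{2star}).
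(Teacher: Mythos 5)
Your proof is correct and follows essentially the same route as the paper: the same iterated application of (\ref{lefdec}) with exactly the same bookkeeping of reference classes $\omega$ and $\omega^{2(p-i)}\wedge\Omega_p$ in part (2), and the same degree-counting vanishing argument (comparing $2p-i-j$ with $2(p-\max\{i,j\})+1$) followed by cancellation of the $|\lambda|^2$ terms in part (3). The only cosmetic difference is in part (1), where you obtain injectivity of $\omega\wedge(\cdot)$ on $H^{p-1,q-1}(M,\mathbb{C})$ from the mixed Hard Lefschetz isomorphism (\ref{hardlef}) applied with $\omega^2\wedge\Omega$, whereas the paper quotes the classical Hard Lefschetz theorem for the single class $\omega$; both are valid one-line arguments.
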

\begin{proof}~
\begin{enumerate}
\item

The usual Hard Lefschetz theorem (\cite[p. 122]{GH}) tells us that
the map
$$\omega^{n-p-q}\wedge(\cdot):~H^{p,q}(M,\mathbb{C})\rightarrow
H^{n-q,n-p}(M,\mathbb{C})$$ is an isomorphism. This means that, for
$p+q\leq n-1$, the map
$$\omega\wedge(\cdot):~H^{p,q}(M,\mathbb{C})\rightarrow
H^{p+1,q+1}(M,\mathbb{C})$$ is injective and consequently
$$\text{dim}_{\mathbb{C}}\big(\omega\wedge
H^{p-1,q-1}(M,\mathbb{C})\big)=h^{p-1,q-1}$$ for $1\leq
p,q\leq[\frac{n}{2}],$ which, together with (\ref{lefdec}), leads to
(\ref{dimensionformula}).

\item
The strategy for proving (\ref{1star}) is to apply (\ref{lefdec})
repeatedly to yield the desired $\alpha_i$.

We first apply (\ref{lefdec}) to this $\alpha$ with respect to the
reference K\"{a}hler classes $\omega$ and
$(\omega_1,\ldots,\omega_{n-2p})$ to yield $\alpha_p$:
\be\label{ref1}\alpha=\alpha_p+\omega\wedge
\tilde{\alpha}_{p-1}\qquad\text{with} \qquad\alpha_p\in
P^{p,p}(M;\omega,\Omega_p).\nonumber \ee
 We continue to apply
(\ref{lefdec}) to $\tilde{\alpha}_{p-1}\in
H^{p-1,p-1}(M,\mathbb{C})$ with respect to the reference K\"{a}hler
classes $\omega$ and $(\omega,\omega,\omega_1,\ldots,\omega_{n-2p})$
to yield $\alpha_{p-1}$:
\be\label{ref2}\tilde{\alpha}_{p-1}=\alpha_{p-1}+\omega\wedge\tilde{\alpha}_{p-2}\qquad\text{with}
\qquad\alpha_{p-1}\in
P^{p-1,p-1}(M;\omega,\omega^2\wedge\Omega_p).\nonumber\ee Obviously
the next step is to apply (\ref{lefdec}) to $\tilde{\alpha}_{p-2}\in
H^{p-2,p-2}(M,\mathbb{C})$ with respect to the reference K\"{a}hler
classes $\omega$ and
$(\omega,\omega,\omega,\omega,\omega_1,\ldots,\omega_{n-2p})$ to
obtain
\be\label{ref2}\tilde{\alpha}_{p-2}=\alpha_{p-2}+\omega\wedge\tilde{\alpha}_{p-3}~
\qquad\text{with\qquad $\alpha_{p-2}\in
P^{p-2,p-2}(M;\omega,\omega^4\wedge\Omega_p)$}.\nonumber\ee Now it
is easy to see that repeated use of (\ref{lefdec}) to
$\tilde{\alpha}_{p-i}$ determined by $\tilde{\alpha}_{p-i+1}$ with
respect to the K\"{a}hler classes $\omega$ and
$(\underbrace{\omega,\ldots,\omega}_{\text{$2i$
copies}},\omega_1,\ldots,\omega_{n-2p})$ yields the desired
$\alpha_{p-i}:$
\be\tilde{\alpha}_{p-i}=\alpha_{p-i}+\omega\wedge\tilde{\alpha}_{p-i-1}~
\qquad\text{with\qquad $\alpha_{p-i}\in
P^{p-i,p-i}(M;\omega,\omega^4\wedge\Omega_p)$}.\nonumber\ee
 This completes the proof of (\ref{1star}).

\item
We now know from (\ref{1star}) that
\be\label{3star}\alpha\wedge\omega^{p}\wedge\Omega_p=
\lambda\omega^{2p}\wedge\Omega_p+\sum_{i=1}^p\alpha_i\wedge\omega^{2p-i}\wedge\Omega_p\ee
and \be\label{4star}\begin{split}
&\alpha\wedge\bar{\alpha}\wedge\Omega_p\\
=&(\lambda\omega^p+\sum_{i=1}^p\alpha_i\wedge\omega^{p-i})\wedge
(\bar{\lambda}\omega^p+\sum_{j=1}^p\bar{\alpha}_j\wedge\omega^{p-j})\wedge\Omega_p\\
=& \big(|\lambda|^2\omega^{2p}+
\lambda\sum_{j=1}^p\bar{\alpha}_j\wedge\omega^{2p-j}
+\bar{\lambda}\sum_{i=1}^p\alpha_i\wedge\omega^{2p-i}+
\sum_{i,j=1}^p\alpha_i\wedge\bar{\alpha}_j\wedge\omega^{2p-(i+j)}\big)\wedge\Omega_p
.\end{split}\ee

Note that
\begin{eqnarray}\label{s}
\left\{ \begin{array}{ll}
\text{$\alpha_i\wedge\omega^{2(p-i)+1}\wedge\Omega_p=0$ by
(\ref{star})}, &
1\leq i\leq p,\\
2p-i\geq
2(p-i)+1, & 1\leq i\leq p,\\
2p-(i+j)\geq2(p-\text{max\{i,j\}})+1, & 1\leq i\neq j\leq p.
\end{array} \right.
\end{eqnarray}

This means that (\ref{3star}) and (\ref{4star}) can be simplified
via (\ref{s}) as follows.
\be\label{5star}\alpha\wedge\omega^{p}\wedge\Omega_p=
\lambda\omega^{2p}\wedge\Omega_p\ee and
\be\label{6star}\alpha\wedge\bar{\alpha}\wedge\Omega_p=
|\lambda|^2\omega^{2p}\wedge\Omega_p+
\sum_{i=1}^p\alpha_i\wedge\bar{\alpha}_i\wedge\omega^{2(p-i)}\wedge\Omega_p.\ee

Integrating (\ref{5star}) and (\ref{6star}) over $M$ deduces that
\be\label{star1}\lambda=\frac{\int_M\alpha\wedge\omega^{p}\wedge\Omega_p}
{\int_M\omega^{2p}\wedge\Omega_p},\qquad
\bar{\lambda}=\frac{\int_M\bar{\alpha}\wedge\omega^{p}\wedge\Omega_p}
{\int_M\omega^{2p}\wedge\Omega_p},\ee and
\be\label{star2}\int_M\alpha\wedge\bar{\alpha}\wedge\Omega_p=
\lambda\cdot\bar{\lambda}\cdot\int_M\omega^{2p}\wedge\Omega_p+
\sum_{i=1}^p\int_M\alpha_i\wedge\bar{\alpha}_i
\wedge\omega^{2(p-i)}\wedge\Omega_p.\ee (\ref{2star}) now follows
from substituting the two expressions in (\ref{star1}) for the
$\lambda$ and $\bar{\lambda}$ in (\ref{star2}).
\end{enumerate}
\end{proof}
Now we are ready to prove Theorem \ref{mainresult}, our main result
in this article.
\begin{proof}
It suffices to prove the first part in Theorem \ref{mainresult} as
the resulting two cases are similar.

Since $\alpha_i\in P^{i,i}(M;\omega,\omega^{2(p-i)}\wedge\Omega_p)$,
the positive-definiteness of the mixed Hodge-Riemann bilinear forms
 guarantees that
 $Q_{(\omega^{2(p-i)}\wedge\Omega_p)}(\alpha_i,\alpha_i)\geq0$ with
 equality if and only if $\alpha_i=0$. This, together with the
 definition of $Q_{(\cdot)}(\cdot,\cdot)$ in (\ref{quadric}),
 implies that
\be\label{8star}(-1)^i\int_M\alpha_i\wedge\bar{\alpha_i}\wedge\omega^{2(p-i)}\wedge\Omega_p\geq
0,\qquad 1\leq i\leq p,\ee with the equality holds if and only if
$\alpha_i=0$.

We first show the ``if" part of (1) in Theorem \ref{mainresult}.

 The dimension formula (\ref{dimensionformula}) in Lemma
\ref{lemma1} and the assumption (\ref{hodge1}) imply that
$$\alpha_{2i+1}=0,\qquad 0\leq i\leq[\frac{p+1}{2}]-1,$$ which,
together with (\ref{2star}) and (\ref{8star}), give us
$$g(\alpha,\omega;\Omega_p)=\big(\int_M\omega^{2p}\wedge\Omega_p\big)\cdot\big(\sum_{\substack{1\leq i\leq
p\\i~\textrm{even}}}\int_M\alpha_i\wedge\bar{\alpha_i}
\wedge\omega^{2(p-i)}\wedge\Omega_p\big)\geq 0,$$
 with equality if and only if all $\alpha_i=0$ and thus
$\alpha\in\mathbb{C}\omega^p$ by the decomposition formula
(\ref{1star}).

The proof of the ``only if" part.

Suppose on the contrary that there exists some $1\leq
i_0\leq[\frac{p+1}{2}]-1$ such that
$h^{2i_0+1,2i_0+1}>h^{2i_0,2i_0}$. Then we can choose a
$$0\neq\alpha(i_0)\in
P^{2i_0+1,2i_0+1}(M;\omega,\omega^{2\big(p-(2i_0+1)\big)}\wedge\Omega_p)$$
by (\ref{dimensionformula}) and set
$$\theta:=\omega^p+\alpha(i_0)\wedge\omega^{p-(2i_0+1)}.$$ But in
this case
$$g(\theta,\omega;\Omega_p)=\big(\int_M\omega^{2p}\wedge\Omega_p\big)\cdot
\big(\int_M\alpha(i_0)\wedge\bar{\alpha(i_0)}\wedge
\omega^{2\big(p-(2i_0+1)\big)}\wedge\Omega_p\big)<0$$ and thus this
$\theta$ does not satisfy Cauchy-Schwarz inequality with respect to
the K\"{a}hler classes $\omega$ and
$(\omega_1,\ldots,\omega_{n-2p})$ in the sense of Definition
\ref{def}, which contradicts to the assumption. This gives the
desired proof.
\end{proof}

The corollary below follows from the process of the above proof.

\begin{corollary}
Suppose $\alpha\in H^{p,p}(M,\mathbb{C})$ with $1\leq
p\leq[\frac{n}{2}]$. Then this $\alpha$ satisfies Cauchy-Schwarz
(resp. opposite Cauchy-Schwarz) inequality with respect to
K\"{a}hler classes $\omega,\omega_1,\ldots,\omega_{n-2p}$ if those
$\alpha_i$ with $i$ odd (resp. even) determined by the decomposition
formula (\ref{2star}) all vanish.
\end{corollary}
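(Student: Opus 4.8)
The plan is to extract the statement directly from the two identities already assembled in Lemma \ref{lemma1} and in the proof of Theorem \ref{mainresult}, so that no new geometric input is required. I would begin from the evaluation (\ref{2star}), which rewrites
\[ g(\alpha,\omega;\Omega_p)=\big(\int_M\omega^{2p}\wedge\Omega_p\big)\cdot\Big(\sum_{i=1}^p\int_M\alpha_i\wedge\bar{\alpha_i}\wedge\omega^{2(p-i)}\wedge\Omega_p\Big) \]
entirely in terms of the components $\alpha_i$ produced by the Lefschetz-type decomposition (\ref{1star}). Because $\omega,\omega_1,\ldots,\omega_{n-2p}$ are all K\"{a}hler classes, the prefactor $\int_M\omega^{2p}\wedge\Omega_p$ is strictly positive, and hence the sign of $g(\alpha,\omega;\Omega_p)$ is dictated solely by the bracketed sum.

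The one fact I would invoke is the sign estimate (\ref{8star}), that is $(-1)^i\int_M\alpha_i\wedge\bar{\alpha_i}\wedge\omega^{2(p-i)}\wedge\Omega_p\geq 0$ for each $1\leq i\leq p$; this is precisely the positive-definiteness clause of Theorem \ref{DN} applied to $\alpha_i\in P^{i,i}(M;\omega,\omega^{2(p-i)}\wedge\Omega_p)$. Consequently every summand with $i$ even contributes nonnegatively and every summand with $i$ odd contributes nonpositively.

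Next I would split the argument by parity. If all the $\alpha_i$ with $i$ odd vanish, only the even-indexed terms survive in the sum, each of which is nonnegative, so $g(\alpha,\omega;\Omega_p)\geq 0$ and $\alpha$ satisfies the Cauchy-Schwarz inequality. By the mirror-image reasoning, if all the $\alpha_i$ with $i$ even vanish, only the odd-indexed terms remain, each nonpositive, so $g(\alpha,\omega;\Omega_p)\leq 0$ and $\alpha$ satisfies the opposite Cauchy-Schwarz inequality. This is exactly the asserted dichotomy.

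I do not expect a genuine obstacle, since the substantive work — the decomposition (\ref{1star}), its evaluation (\ref{2star}), and the sign inequality (\ref{8star}) — was already completed upstream; this corollary merely isolates the parity mechanism of the main proof from the Hodge-number hypotheses (\ref{hodge1}) and (\ref{hodge2}) that were used there to \emph{guarantee} the vanishing of the relevant $\alpha_i$. The only point requiring care is the parity bookkeeping: one must correctly pair odd $i$ with the nonpositive contribution and even $i$ with the nonnegative one, as reversing this correspondence would flip the two conclusions.
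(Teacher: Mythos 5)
Your proof is correct and is exactly the argument the paper intends: the paper states this corollary "follows from the process of the above proof," and that process is precisely your combination of the expression (\ref{2star}), the strict positivity of $\int_M\omega^{2p}\wedge\Omega_p$, and the parity-dependent sign (\ref{8star}) coming from the positive-definiteness of $Q_{(\omega^{2(p-i)}\wedge\Omega_p)}(\cdot,\cdot)$ on $P^{i,i}(M;\omega,\omega^{2(p-i)}\wedge\Omega_p)$. Your parity bookkeeping (odd $i$ nonpositive, even $i$ nonnegative) matches the conventions of the paper, so nothing is missing.
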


\section{A proportionality problem}\label{section3}
Let $\mathcal{K}\in H^{1,1}(M,\mathbb{R})$ be the K\"{a}hler cone of
$M$, which consists of all the K\"{a}hler classes of $M$. Recall
that $c\in H^{1,1}(M,\mathbb{R})$ is called a \emph{nef} class if
$c\in\bar{\mathcal{K}}$, the closure of the K\"{a}hler cone. So nef
classes can be approximated by K\"{a}hler classes. This means
Theorem \ref{mainresult} has  the following corollary.

\begin{corollary}\label{coro2}
Suppose $M$ is an $n$-dimensional compact connected K\"{a}hler
manifold.
\begin{enumerate}
\item
Given $1\leq p\leq[\frac{n}{2}]$, if the Hodge numbers of $M$
satisfy \be h^{2i,2i}=h^{2i+1,2i+1},\qquad 0\leq
i\leq[\frac{p+1}{2}]-1,\nonumber\ee then for any $\alpha\in
H^{p,p}(M,\mathbb{C})$ and any nef classes $c,c_1,\ldots,c_{n-2p}$
we have
 \be\label{nefcs1}\big(\int_M\alpha\wedge\bar{\alpha}\wedge C_p\big)\cdot
\big(\int_M c^{2p}\wedge C_p\big)\geq \big(\int_M\alpha\wedge
c^p\wedge C_p\big)\cdot\big(\int_M\bar{\alpha} \wedge c^p\wedge
C_p\big),\ee where $C_p=c_1\wedge\cdots\wedge c_{n-2p}.$

\item
For any $\alpha\in H^{1,1}(M,\mathbb{C})$ and any nef classes
$c,c_1,\ldots,c_{n-2}$ we have
 \be\label{nefcs2}\big(\int_M\alpha\wedge\bar{\alpha}\wedge C)\cdot
\big(\int_M c^{2p}\wedge C\big)\leq \big(\int_M\alpha\wedge
c^p\wedge C\big)\cdot\big(\int_M\bar{\alpha} \wedge c^p\wedge
C\big),\ee where $C=c_1\wedge\cdots\wedge c_{n-2}.$

\item
Given $2\leq p\leq[\frac{n}{2}]$, if the Hodge numbers of $M$
satisfy \be h^{2i-1,2i-1}=h^{2i,2i},\qquad 1\leq
i\leq[\frac{p}{2}],\nonumber\ee then for any $\alpha\in
H^{p,p}(M,\mathbb{C})$ and any nef classes $c,c_1,\ldots,c_{n-2p}$
we have
 \be\label{nefcs3}\big(\int_M\alpha\wedge\bar{\alpha}\wedge C_p\big)\cdot
\big(\int_M c^{2p}\wedge C_p\big)\leq \big(\int_M\alpha\wedge
c^p\wedge C_p\big)\cdot\big(\int_M\bar{\alpha} \wedge c^p\wedge
C_p\big),\ee where $C_p=c_1\wedge\cdots\wedge c_{n-2p}.$
\end{enumerate}
\end{corollary}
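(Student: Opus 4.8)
The plan is to obtain all three inequalities from Theorem \ref{mainresult} by a single continuity-and-approximation argument, exploiting that each claimed inequality is, up to sign, exactly the statement that the quantity $g$ of Definition \ref{def} has a fixed sign. The first thing I would record is this identification: reading $c$ in place of $\omega$ and $C_p$ in place of $\Omega_p$, the inequalities (\ref{nefcs1}), (\ref{nefcs2}), (\ref{nefcs3}) assert precisely that $g(\alpha,c;C_p)\geq 0$, $g(\alpha,c;C_p)\leq 0$, and $g(\alpha,c;C_p)\leq 0$ respectively, the only difference from Theorem \ref{mainresult} being that the reference classes $c,c_1,\ldots,c_{n-2p}$ are now merely nef rather than Kähler. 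Since $g(\alpha,\cdot\,;\cdot)$ is assembled from finitely many cup products followed by integration over $M$, it is a polynomial---hence continuous---function of the cohomology classes $c,c_1,\ldots,c_{n-2p}$ with $\alpha$ held fixed. This continuity is the engine that will carry the Kähler inequalities to the nef boundary.

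Next I would set up the perturbation. Fixing any Kähler class $\omega_0\in\mathcal{K}$, for $\varepsilon>0$ I replace the nef data by $c^{(\varepsilon)}:=c+\varepsilon\omega_0$ and $c_j^{(\varepsilon)}:=c_j+\varepsilon\omega_0$. Because $\mathcal{K}$ is an open convex cone and each $c,c_j\in\bar{\mathcal{K}}$, the sum of a nef class and a Kähler class is again Kähler, so every $c^{(\varepsilon)}$ and $c_j^{(\varepsilon)}$ is a genuine Kähler class. I can therefore apply Theorem \ref{mainresult} to the Kähler classes $c^{(\varepsilon)}$ and $(c_1^{(\varepsilon)},\ldots,c_{n-2p}^{(\varepsilon)})$; crucially, the Hodge-number hypotheses (\ref{hodge1}) and (\ref{hodge2}) are conditions on $M$ alone and do not involve the chosen classes, so whenever they hold for the corollary they hold uniformly for every $\varepsilon$. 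This yields $g(\alpha,c^{(\varepsilon)};C_p^{(\varepsilon)})\geq 0$ in case (1) and $\leq 0$ in cases (2) and (3) for all $\varepsilon>0$, where $C_p^{(\varepsilon)}=c_1^{(\varepsilon)}\wedge\cdots\wedge c_{n-2p}^{(\varepsilon)}$. Letting $\varepsilon\to 0$ and invoking continuity of $g$, the quantities $g(\alpha,c^{(\varepsilon)};C_p^{(\varepsilon)})$ converge to $g(\alpha,c;C_p)$, and a weak inequality is stable under limits, which delivers (\ref{nefcs1}), (\ref{nefcs2}), (\ref{nefcs3}).

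The step that I expect to require the most care is the honest accounting of what survives the limit, rather than any analytic difficulty. Theorem \ref{mainresult} provides strict inequalities together with a sharp equality characterization (proportionality to a power of the Kähler class), and both the strictness and the equality description genuinely rely on the reference classes being Kähler: the positive-definiteness of the mixed Hodge-Riemann form in Theorem \ref{DN}(3) can degenerate as the classes approach $\partial\mathcal{K}$. Consequently only the weak inequalities pass to nef classes, and I would state Corollary \ref{coro2} without any equality clause, since proportionality to $c^p$ need not characterize equality once $c$ is only nef. The one genuinely new ingredient to verify is the cone fact $\bar{\mathcal{K}}+\mathcal{K}\subseteq\mathcal{K}$ that makes the perturbations admissible; this is a standard property of open convex cones (approximate $c$ by $c_m\in\mathcal{K}$, and for $m$ large $\omega_0-(c_m-c)\in\mathcal{K}$, so $c+\omega_0=c_m+\big(\omega_0-(c_m-c)\big)\in\mathcal{K}$), and I would include it only briefly.
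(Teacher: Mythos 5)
Your proposal is correct and is essentially the paper's own argument: the paper deduces Corollary \ref{coro2} precisely by noting that nef classes lie in $\bar{\mathcal{K}}$ and can be approximated by K\"ahler classes, so the weak inequalities of Theorem \ref{mainresult} pass to the limit by continuity of the intersection pairing. Your explicit perturbation $c+\varepsilon\omega_0$, the cone fact $\bar{\mathcal{K}}+\mathcal{K}\subseteq\mathcal{K}$, and your observation that the equality characterization does \emph{not} survive the limit (which the paper likewise emphasizes immediately after the corollary) simply flesh out the details the paper leaves implicit.
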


 However, unlike Theorem \ref{mainresult} for K\"{a}hler classes, we can \emph{not}
conclude directly in this case that the equalities in
(\ref{nefcs1}), (\ref{nefcs2}) and (\ref{nefcs3}) hold if and only
if $\alpha$ and the nef class $c$ are proportional. So a natural
question is to characterize the equalities in
(\ref{nefcs1})-(\ref{nefcs3}), a very special case of which has been
proposed by Teissier in \cite{Tei1} as a further question related to
his inequality (\ref{KT}) and we shall briefly reivew it in what
follows.

 (\ref{KT}) or (\ref{nefcs2}) gives us that,
for two \emph{nef} divisors $D_1$ and $D_2$ on an algebraic manifold
and for $1\leq k\leq n-1$, we have \be\label{nefCS2}\big([D_1^k
D_2^{n-k}]\big)^2\geq[D_1^{k-1} D_2^{n-k+1}]\cdot[D_1^{k+1}
D_2^{n-k-1}].\ee Teissier considered in \cite{Tei1} that how to
characterize the equality case in (\ref{nefCS2}) for nef and
\emph{big} divisors $D_1$ and $D_2$ (recall that a nef divisor $D$
is called \emph{big} if moreover $[D^n]>0$). This problem was solved
in \cite[Theorem D]{BFJ} by Boucksom, Favre and Jonsson, whose
result asserts that for two nef and big divisors $D_1$ and $D_2$ the
equality in (\ref{nefCS2}) holds if and only if $D_1$ and $D_2$ are
numerically proportional. Very recently Fu and Xiao obtained the
same type result in the context of K\"{a}hler manifolds
(\cite[Theorem 2.1]{FX2}), some of whose ideas are based on their
previous work in \cite{FX1}.

\begin{remark}
The expression used in \cite[Theorem D, (2)]{BFJ} is slightly
different from our (\ref{nefCS2}) but they are indeed equivalent
(see, for instance, the equivalent statements in (\cite[Theorem
2.1]{FX2}).
\end{remark}

With the Cauchy-Schwarz-type inequalities in its full generality in
Corollary \ref{coro2} in hand, we can now end our article by posing
the following problem, whose solution is obviously beyond the
content of this note.

\begin{question}
How to characterize the three inequality cases in (\ref{nefcs1}),
(\ref{nefcs2}) and (\ref{nefcs3})? Clearly $\alpha$ being
proportional to $c^p$ is a sufficient condition. Is this also a
necessary condition? Or weakly how to establish such a necessary
condition by imposing more constraints on  the element $\alpha$, the
nef classes $c,c_1,\ldots,c_{n-2p}$ and/or the underlying K\"{a}hler
manifold $M$?
\end{question}

\section*{Acknowledgements}
The author thanks Ji-Xiang Fu for drawing his attention to the
reference \cite{DN} in a talk on \cite{FX2} in a workshop held in
Center of Mathematical Science, Zhejiang University, inspired by
which the author realized that his idea in \cite{Li} could be
carried over to extend the classical Khovanskii-Teissier
inequalities to higher-dimensional cases.

\bibliographystyle{amsplain}

\end{document}